\documentclass[a4paper]{amsart}

\usepackage{amssymb,mathrsfs,enumitem,amsmath,bbold}
\usepackage[mathscr]{euscript}
\usepackage{color}
\definecolor{Chocolat}{rgb}{0.36, 0.2, 0.09}
\definecolor{BleuTresFonce}{rgb}{0.215, 0.215, 0.36}
\definecolor{EgyptianBlue}{rgb}{0.06, 0.2, 0.65}

\usepackage[colorlinks,final,hyperindex]{hyperref}
\hypersetup{citecolor=BleuTresFonce, urlcolor=EgyptianBlue, linkcolor=Chocolat}

\usepackage{fourier}

\usepackage[ps,all]{xy}
\xyoption{line}
\CompileMatrices

\newtheorem{theorem}{Theorem}[section]

\newtheorem{lemma}[theorem]{Lemma}
\newtheorem{proposition}[theorem]{Proposition}

\theoremstyle{definition}

\newtheorem{remark}[theorem]{Remark}
\newtheorem{definition}[theorem]{Definition}

\DeclareMathAlphabet{\pazocal}{OMS}{zplm}{m}{n}

\def\calO{\pazocal{O}}

\def\calT{\pazocal{T}}

\def\calX{\pazocal{X}}

\def\bbW{\mathbb{W}}

\DeclareMathOperator{\id}{id}

\DeclareMathOperator{\Set}{\mathsf{Set}}

\DeclareMathOperator{\Ass}{Ass}

\DeclareMathAlphabet{\mathbbold}{U}{bbold}{m}{n}

\def\k{\mathbbold{k}}

\newcommand{\LYRY}[5]{\ensuremath{
 \xygraph{
!{<0pt,0pt>;<4pt,0pt>:<0pt,-4pt>::}
!{(1,4)}*{\scriptscriptstyle #1}
!{(1,3)}="a"
!{(1,1)}="b"
!{(-1,-1)}="c"
!{(3,-1)}="d"
!{(-2,-4)}*{\scriptscriptstyle #2}
!{(-2,-3)}="e"
!{(0,-4)}*{\scriptscriptstyle #3}
!{(0,-3)}="f"
!{(2,-3)}="g"
!{(4,-3)}="h"
!{(2,-4)}*{\scriptscriptstyle #4}
!{(4,-4)}*{\scriptscriptstyle #5}
"b"-"c"
"b"-"d"
"c"-"e"
"c"-"f"
"d"-"g"
"d"-"h"
}
}}

\newcommand{\lbincomb}[5]{\ensuremath{%
\vcenter{\hbox{\xymatrix@R=.4pc@C=.2pc{%
			#3\ar@{-}[dr] &&#4\ar@{-}[dl] &\\
			&*+[o][F-]{#2}\ar@{-}[dr]&&#5\ar@{-}[dl]\\
			&&*+[o][F-]{#1}\ar@{-}[d]&\\
			&&*{}&
}}}}}

\begin{document}

\title{Word operads and admissible orderings}

\author{Vladimir Dotsenko}
\address{School of Mathematics, Trinity College, Dublin 2, Ireland}
\email{vdots@maths.tcd.ie}

\begin{abstract}
We use Giraudo's construction of combinatorial operads from monoids to offer a conceptual explanation of the origins of Hoffbeck's path sequences of shuffle trees, and use it to define new monomial orders of shuffle trees. One such order is utilised to exhibit a quadratic Gr\"obner basis of the Poisson operad.  
\end{abstract}

\maketitle

\section*{Introduction} 

In \cite{DK}, the notion of a shuffle operad was introduced and utilised to develop a formalism of operadic Gr\"obner bases. The latter is indispensable for purposes of linear, homological, and homotopical algebra for operads. In order to use operadic Gr\"obner bases, one has to come up, for each specific application, with a monomial order that extracts the ``correct'' leading terms from the defining relations. In the decade that elapsed since dissemination of \cite{DK}, most applications of shuffle operads have been using the path-lexicographic order introduced in that paper, or its minor variations. The purpose of this short note is to offer a conceptual explanation of the origins of that order which also leads to a plethora of new orders which have remained unnoticed until now. In particular, we demonstrate how one of such orders can be used to exhibit a quadratic Gr\"obner basis of the Poisson operad; a construction of that sort is required as an intermediate step in one of the arguments in a recent preprint \cite{KW}.

The main observation at the heart of this note is that the combinatorics of path sequences in free shuffle operads \cite{BD,DK,Ho} can be naturally derived from the the construction of operads from monoids due to Giraudo \cite{Gir} (related to previous work of M\'endez and Nava \cite{MN} and also Berger and Moerdijk \cite{BM}). Our definition of an order on the monoid of ``quantum monomials'' appears to be new; besides the application we present, monomial orders based on this monoid can be used to prove freeness of certain operadic modules, leading to functorial PBW theorems for various universal enveloping algebras~\cite{D19,DT}.

The word operad construction below applies to either of the three commonly used types of operads: symmetric, nonsymmetric, and shuffle. We refer to symmetric operads as operads, while the two other types of operads always appear with a specific adjective. The reader is invited to consult~\cite{BD,LV} for background information on operad theory. Most of our constructions utilise operads in the symmetric monoidal category $(\Set,\times)$; in the only situation when one has to consider $\k$-linear operads, we state it explicitly.  We denote by $\mu_\tau(-;-,\ldots,-)$ the structure maps of a given operad (here $\tau$ is a 2-level tree, and the type of the tree is prescribed by the type of the operad that we consider, i.e. symmetric, nonsymmetric, or shuffle). 

\textbf{Acknowledgements. } The author is grateful to Anton Khoroshkin and Pedro Tamaroff for useful discussions. 

\section{Word operads}\label{monoids}

The following definition is essentially due to Giraudo~\cite{Gir}; we use the language of species \cite{BLL} for clarity. 

\begin{definition}[Word operad]
Suppose that $(M,\star)$ is a monoid. The species $\bbW_M$ is defined by the formula $\bbW_M(I)=M^I$. 
For each map $f\colon I\to\underline{n}$, we have a map
 \[
\gamma_f\colon\bbW_M(\underline{n})\times\bbW_M(f^{-1}(1))\times\cdots\times\bbW_M(f^{-1}(n))\to\bbW_M(I)
 \]
defined by the formula
 \[
\gamma_f(a;b_1,\ldots,b_n)(i):=a(f(i))\star b_{f(i)}(i).
 \] 
By a direct inspection, these maps satisfy the properties required of compositions in an operad. The resulting operad is called the \emph{word operad of~$M$}.
We can also consider the associated \emph{shuffle word operad} $\bbW_M^{sh}$, and the associated \emph{nonsymmetric word operad} $\bbW_M^{ns}$.
\end{definition} 

The two crucial combinatorial objects associated to monomials in free shuffle operads are path sequences and permutation sequences \cite{BD,DK}. Let us explain how those arise naturally in the context of word operads.

\begin{definition}
Let $\calX$ be a sequence of sets with $\calX(0)=\varnothing$. We denote 
 \[
\underline{\calX}:=\bigsqcup\limits_{n\ge 1} \calX(n) , 
 \]
the union of all these sets taken together. We may consider the free shuffle operad $\calT^{sh}(\calX)$
and the free monoid $T(\underline{\calX})$. The map of operads 
  \[
\theta\colon\calT^{sh}(\calX)\to\bbW^{sh}_{T(\underline{\calX})}
  \]
is the unique morphism of shuffle operads extending the sequence of maps \[\theta_n\colon\calX(n)\to T(\underline{\calX})^n\] with $(\theta_n(x))_k=x$ for all $1\le k\le n$. For an element $T\in\calT(\calX)$, the element $\theta(T)$ is called the \emph{path sequence of $T$}. 
 \end{definition}

By a direct inspection, for an element $T\in\calT(\calX)(n)$, the sequence $\theta_n(T)$ coincides with the path sequence of a tree tensor defined by Hoffbeck \cite{Ho}, see also \cite{BD,DK}; for example,
 \[
\theta_3\left( \lbincomb{a}{b}{1}{3}{2} \right)=(ab,a,ab) .
 \] 
Our set-up somewhat clarifies the key feature of Hoffbeck's construction, informally expressed by the statement ``path sequences of tree tensors behave well under operadic compositions''.

\begin{definition}
Let $\calX$ be a sequence of sets with $\calX(0)=\varnothing$.  
The map of operads 
  \[
\sigma\colon\calT^{sh}(\calX)\to\Ass^{sh}
  \]
is the unique morphism of shuffle operads extending the sequence of maps \[\sigma_n\colon\calX(n)\to \Ass(n)\] with $(\sigma_n(x))=\id$. For an element $T\in\calT(\calX)$, the element $\sigma(T)$ is called the \emph{permutation of $T$}. 
 \end{definition}

By a direct inspection, for an element $T\in\calT(\calX)(n)$, the element $\sigma_n(T)$ coincides with the permutation sequence of a tree monomial defined in~\cite{DK}; for example,
 \[
\sigma_n\left( \lbincomb{a}{b}{1}{3}{2} \right)=\begin{pmatrix}1&2&3\\ 1&3&2\end{pmatrix} .
 \] 

\begin{remark}
Neither the map $\sigma$ nor the map $\theta$ are equivariant with respect to the symmetric group actions: they only make sense in the universe of shuffle operads. 
\end{remark}

The reason permutations of shuffle trees are useful is that the map from the free shuffle operad into the Hadamard product of the operads $\bbW_{T(\underline{\calX})}$ and $\Ass$ is injective~\cite{BD,DK}. This allows one to reduce the more intricate combinatorics of trees to various familiar features of words and permutations.  

\medskip

To state the main result of this section, we need the definition of an ordered shuffle operad.

\begin{definition}[Ordered shuffle operad] 
A shuffle operad $\calO$ is said to be \emph{ordered}, if each component $\Gamma(n)$ is equipped with a partial order $\prec$ for which every structure map $\mu_T$ is an increasing function of its arguments: if we replace one of the arguments of any structure map $\mu_T(-;-,\ldots,-)$ by an element from the same component of $\calO$ which is greater with respect to $\prec$, the result is also greater with respect to $\prec$. 
\end{definition}

For instance, the two-level tree 
 \[
\LYRY{}{1}{3}{2}{4}
 \]
represents the composite $\alpha(\beta_1(a_1,a_3),\beta_2(a_2,a_4))$ in a shuffle operad. One of the implications of the above definition is that if $\beta_1\prec\beta_1'$, then we must have 
 \[
\alpha(\beta_1(a_1,a_3),\beta_2(a_2,a_4))\prec\alpha(\beta_1'(a_1,a_3),\beta_2(a_2,a_4)) .
 \]

A particular case of an ordered set operad is an ordered monoid: an ordered monoid is an ordered operad concentrated in arity one. More classically, one can say that an ordered monoid is a monoid $(M,\star)$ equipped with a partial order $\prec$ for which $a\prec a'$ implies $a\star b\prec a'\star b$ and $b\star a\prec b\star a'$. It turns out that the $\bbW$-construction satisfies the following remarkable property. 

\begin{proposition}
Suppose that $M$ is an ordered monoid. Then $\bbW_M^{sh}$ with the lexicographic order of tuples is an ordered shuffle operad.  
\end{proposition}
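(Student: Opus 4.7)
The plan is to verify the monotonicity axiom for the two-level shuffle composition maps $\gamma_f$ directly, after which monotonicity of a general structure map $\mu_T$ follows by iterating composition along the edges of $T$. So I would fix $f\colon\underline m\to\underline n$ satisfying the shuffle condition $\min f^{-1}(1)<\cdots<\min f^{-1}(n)$ and work from the defining formula $\gamma_f(a;b_1,\ldots,b_n)(i)=a(f(i))\star b_{f(i)}(i)$, treating the inner arguments and the outer argument separately.

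For an inner argument, I would take $b_j\prec b_j'$ and let $i_0$ be the first index in $f^{-1}(j)$ at which the two tuples disagree. For any $i\in\underline m$ with $i<i_0$, either $f(i)\ne j$, in which case the output coordinate uses $b_{f(i)}$ and is unchanged, or $f(i)=j$ and $i$ precedes $i_0$ inside $f^{-1}(j)$, in which case $b_j(i)=b_j'(i)$. At the index $i_0$ itself $f(i_0)=j$ and $b_j(i_0)\prec b_j'(i_0)$, and the ordered-monoid axiom gives $a(j)\star b_j(i_0)\prec a(j)\star b_j'(i_0)$. Hence the output tuple is strictly larger in the lexicographic order.

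For the outer argument I would take $a\prec a'$ with first differing coordinate $k_0\in\underline n$ and set $i_0:=\min f^{-1}(k_0)$. The shuffle condition is precisely what makes this choice work: it forces $f(i)<k_0$ for every $i<i_0$, so $a(f(i))=a'(f(i))$ and the output at $i$ is unaffected. At $i_0$ itself $f(i_0)=k_0$, and the ordered-monoid axiom again gives $a(k_0)\star b_{k_0}(i_0)\prec a'(k_0)\star b_{k_0}(i_0)$, so the output increases lexicographically.

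The only delicate step is this last one: without the shuffle condition there could be indices $i<i_0$ with $f(i)>k_0$ at which $a$ and $a'$ already disagree, and the first change in the output could push the tuple in the wrong direction. The essential point is therefore that the shuffle property aligns the lexicographic order on $\underline m$ with that on $\underline n$, which is exactly what makes $\bbW_M^{sh}$ (rather than $\bbW_M$) ordered, consistent with the remark earlier in the text that the maps $\theta$ and $\sigma$ only make sense for shuffle operads.
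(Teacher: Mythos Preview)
Your argument is correct and is precisely the computation underlying the result. The paper does not actually carry this out: its proof consists of the remark that Hoffbeck's argument for the special case of free monoids \cite[Prop.~3.5]{Ho} goes through verbatim, so what you have written is an explicit, self-contained version of the cited proof rather than a different approach. Your observation that the shuffle condition is exactly what guarantees $f(i)<k_0$ for $i<i_0$ in the outer-argument step is the key point, and it is worth keeping in any write-up.
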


\begin{proof}
The proof of a very particular case of this result (where both the set operad and the monoid are free) given by Hoffbeck \cite[Prop.~3.5]{Ho} works verbatim in full generality. Conveniently, even the terminology used in that proof ``word sequence'' suits our formalism perfectly. 
\end{proof}

\section{Quantum monomials and the Poisson operad}

The toy example we consider in this section is not very deep, but it indicates a possible universe of applications of word operads. Namely, we shall use word operads to show that the Poisson operad has a quadratic Gr\"obner basis. Of course, the Poisson operad is one of the most famous operads ever considered, and both obvious applications of Gr\"obner bases (determining normal forms and proving Koszulness) do not give anything new for it. However, much more complicated operads \cite{D19,DT} can be studied by similar methods; also, results of \cite[Sec.~3.3]{KW} substantially rely on a version of this result. 

Recall that the Poisson operad $\mathsf{Pois}$ is generated by a symmetric binary operation $a_1,a_2\mapsto a_1\cdot a_2$ and a skew-symmetric binary operation $a_1,a_2\mapsto \{a_1,a_2\}$ which satisfy the identities
\begin{gather*}
(a_1\cdot a_2)\cdot a_3=a_1\cdot(a_2\cdot a_3),\\
\{a_1,a_2\cdot a_3\}=\{a_1,a_2\}\cdot a_3+\{a_1,a_3\}\cdot a_2,\\
\{a_1,\{a_2,a_3\}\}=\{\{a_1,a_2\},a_3\}-\{\{a_1,a_3\},a_2\}.
\end{gather*}
It is well known that the free Poisson algebra on a vector space $V$ is isomorphic to $S(\mathsf{Lie}(V))$, which leads to a convenient choice of normal forms in the Poisson operad: it has a basis made of commutative associative products (made of the operation $a_1,a_2\mapsto a_1\cdot a_2$) of Lie monomials (made of the operation $a_1,a_2\mapsto \{a_1,a_2\}$). However, detecting those normal forms on the level of Gr\"obner bases for operads is a tricky task. To explain why it is the case, let us consider the second relation, the ``Leibniz rule'' relating the two operations. In the associated shuffle operad, we have three relations arising from that one:
\begin{gather*}
\{a_1,a_2\cdot a_3\}=\{a_1,a_2\}\cdot a_3+\{a_1,a_3\}\cdot a_2,\\
-\{a_1\cdot a_3,a_2\}=-\{a_1,a_2\}\cdot a_3+a_1\cdot\{a_2,a_3\},\\
-\{a_1\cdot a_2,a_3\}=-a_1\cdot\{a_2,a_3\}-\{a_1,a_3\}\cdot a_2.
\end{gather*}
If we were to find a Gr\"obner basis leading to the normal forms mentioned above, each of these relations must have its left-hand side as the leading term. Known orderings of monomials in the free operad fail to accomplish that, yet it is possible to find such an ordering. This is exactly where we shall use word operads.

\begin{theorem}
There exists an ordering of shuffle tree monomials in two binary generators $\mu$ and $\lambda$ (encoding our two binary operations $\mu\colon a_1,a_2\mapsto a_1\cdot a_2$ and $\lambda\colon a_1,a_2\mapsto \{a_1,a_2\}$) for which the left-hand sides of the three Leibniz rules above are the leading monomials. For that ordering, the defining relations of the Poisson operad form a quadratic Gr\"obner basis.
\end{theorem}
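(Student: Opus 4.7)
The plan is to exploit the word-operad framework developed in Section~\ref{monoids}. First I would construct an ordered monoid $M$ of \emph{quantum monomials}, whose elements record paths in the free shuffle operad on $\{\mu,\lambda\}$ enriched with additional numerical data (such as positional or arity-dependent weights) chosen precisely so that the lexicographic order on tuples becomes compatible with monoid multiplication. By the proposition, $\bbW_M^{sh}$ then becomes an ordered shuffle operad. Composing the quantum path-sequence morphism $\theta\colon\calT^{sh}(\mu,\lambda)\to\bbW_M^{sh}$ with the permutation morphism $\sigma\colon\calT^{sh}(\mu,\lambda)\to\Ass^{sh}$, and using injectivity of $(\theta,\sigma)$ recalled in the paper, I would pull back the product of these orders to an admissible total order on shuffle tree monomials.

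Next I would verify by direct inspection that the three Leibniz left-hand sides are leading. Each term in question is a tree of arity~$3$ of explicit shape, so its quantum path sequence and permutation can be written down and the comparison reduces to comparing a small number of entries in $M$. The monoid $M$ must be calibrated so that all three LHS-versus-RHS comparisons go the right way at once. The same analysis should show that associativity and the shuffled Jacobi relations have the expected long-comb leading terms, making the normal forms compatible with the classical basis of $S(\mathsf{Lie}(V))$.

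To promote this to a Gr\"obner basis, I would then apply the operadic Buchberger/diamond criterion. This amounts to enumerating overlaps between pairs of leading terms of the five generating relations (associativity, three Leibniz rules, shuffled Jacobi), forming the corresponding S-polynomials in arities~$3$ and~$4$, and checking each reduces to zero modulo the full relation set. Since all relations are quadratic and overlaps live in bounded arity, this is a finite combinatorial calculation. The crucial point is that no new generator appears during the reductions, so the quadratic relation set is itself a Gr\"obner basis and the Poisson operad is quadratic Koszul via this presentation.

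The principal obstacle is the construction of $M$ in step one. The naive path-lex on the free monoid $T(\{\mu,\lambda\})$ is not even a monoid order, since concatenation violates lex comparison of unequal-length words (for instance $\mu\prec\mu\mu$ but $\mu\lambda\succ\mu\mu\lambda$ when $\lambda\succ\mu$). The standard length-then-lex remedy restores monoid compatibility but forces $\lambda\prec\mu\lambda$, which immediately ruins the comparison demanded by the first Leibniz rule. The quantum refinement---an extra grading or weighting preserved under multiplication and tuned so that the three Leibniz comparisons simultaneously come out right while remaining translation-invariant---is the real content of the proof. Once the correct $M$ is identified, the remainder of the argument is mechanical.
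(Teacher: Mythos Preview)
Your overall strategy is exactly the paper's: build an ordered monoid $M$, push the generators into $\bbW_M^{sh}$, pull the order back, and then verify. But you stop precisely where the content is. You correctly diagnose that the na\"ive length-lex fix fails and that an extra ``quantum'' datum is needed, yet you never produce the monoid. The paper's $M$ is concrete and small: $\mathsf{QM}=\langle x,y,q\rangle/(xq=qx,\ yq=qy,\ yx=xyq)$, with normal form $x^ky^lq^m$ ordered by reverse-lex in $k$ (so larger $k$ is \emph{smaller}), then lex in $l$, then lex in $m$. The generators are sent to $\psi(\mu)=(x,x)$, $\psi(\lambda)=(y,y)$. The ``quantum'' commutation $yx=xyq$ is exactly the trick that keeps the order translation-invariant while making every element of $\{x,xy\}$ smaller than every element of $\{y,yx\}$; that single inequality is what forces all three Leibniz left-hand sides to lead. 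Your description of $M$ as ``paths enriched with positional or arity-dependent weights'' does not get there, and without this explicit construction the proposal is a plan rather than a proof.

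Two further differences worth noting. First, you propose to obtain a total order by invoking injectivity of $(\theta,\sigma)$; the injectivity recalled in the paper is for the \emph{free} monoid $T(\underline{\calX})$, not for $\mathsf{QM}$, so you would have to argue this separately. The paper sidesteps the issue by taking the superposition of the $\bbW_{\mathsf{QM}}^{sh}$-order with any admissible tiebreaker (e.g.\ path-lex). Second, and more interestingly, your endgame is a full Buchberger check of all S-polynomials in arity~$4$. The paper avoids this entirely: once the Leibniz leading terms are as desired, the normal forms are visibly (at most) the commutative products of Lie monomials, and since those are already known to form a basis of $\mathsf{Pois}$ via $S(\mathsf{Lie}(V))$, a dimension count shows no further Gr\"obner-basis elements can exist. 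Your route would also work, but the paper's argument is a one-line replacement for a lengthy case analysis.
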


\begin{proof}
Let us consider the monoid of ``quantum monomials''$\mathsf{QM}=\langle x,y,q\rangle/(xq-qx,yq-qy,yx-xyq)$. It is immediate to see that each element of that monoid has a unique representative of the form $x^ky^lq^m$ where $k,l,m\ge 0$. 

We define an order on these representatives by putting $x^ky^lq^m\prec x^{k'}y^{l'}q^{m'}$ if $k>k'$ or $k=k'$ and $l<l'$, or $k=k'$ and $l=l'$ and $m<m'$. (Note the ``counterintuitive'' comparison $k>k'$.) 

\begin{lemma}
This order makes $\mathsf{QM}$ into an ordered monoid. 
\end{lemma}

\begin{proof}
We should show that for any $a,b,c\in \mathsf{QM}$, whenever $a\prec b$, we have $ac\prec bc$ and $ca\prec cb$. Let $a=x^ky^lq^m$, $b=x^{k'}y^{l'}q^{m'}$, $c=x^{k''}y^{l''}q^{m''}$. Note that we have 
\begin{gather*}
ac=x^{k+k''}y^{l+l''}q^{m+m''+lk''},\\
bc=x^{k'+k''}y^{l'+l''}q^{m'+m''+l'k''},\\
ca=x^{k+k''}y^{l+l''}q^{m+m''+kl''},\\
cb=x^{k'+k''}y^{l'+l''}q^{m'+m''+k'l''}.
\end{gather*}
Thus, if $a\prec b$ because $k>k'$, we have $ac\prec bc$ and $ca\prec cb$, as $k+k''>k'+k''$. If $a\prec b$ because $k=k'$ and $l<l'$, we have $ac\prec bc$ and $ca\prec cb$, as $k+k''=k'+k''$ and $l+l''<l'+l''$. Finally, if $a\prec b$ because $k=k'$ and $l=l'$ but $m<m'$, we have $ac\prec bc$ and $ca\prec cb$, as $k+k''=k'+k''$, $l+l''=l'+l''$, while $m+m''+lk''<m'+m''+l'k''$ and $m+m''+kl''<m'+m''+k'l''$.
\end{proof} 

We now consider the map $\psi$ from $\calT^{sh}(\mu,\lambda)$ into the word operad $\bbW^{sh}_{\mathsf{QM}}$ defined as follows: 
 \[
\psi(\lambda):=(y,y), \quad \psi(\mu):=(x,x).
 \]
This makes $\calT^{sh}(\mu,\lambda)$ into an ordered operad: to compare two shuffle trees $T_1$ and $T_2$, we compare $\psi(T_1)$ and $\psi(T_2)$ in $\bbW^{sh}_{\mathsf{QM}}$. Let us extend this partial order to a full monomial order arbitrarily, e.g. via a superposition with the path-lexicographic order. We note that the left-hand sides of the three Leibniz rules above are the leading monomials. Indeed, this follows from the fact that each of the elements $\{x,xy\}$ is smaller than each of the elements $\{y,yx\}$ in $\mathsf{QM}$. 

To prove that we obtain a Gr\"obner basis, we note that the associativity relations for $\mu$ form a Gr\"obner basis for any ordering, and so does the Jacobi identity for $\lambda$. By our choice of leading terms of the Leibniz rules, we already ensure that commutative associative products of Lie monomials are normal forms. Thus, no further elements can possibly belong to the reduced Gr\"obner basis, since that would create extra linear dependencies between the normal forms.
\end{proof}

\bibliographystyle{amsplain}
\providecommand{\bysame}{\leavevmode\hbox to3em{\hrulefill}\thinspace}

\end{document}